\theoremstyle{plain}
\newtheorem{thm}{Theorem}[section]
\newtheorem{prop}[thm]{Proposition}
\theoremstyle{definition}
\theoremstyle{remark}
\title{{An Analysis of the Milstein Scheme for SPDEs without a Commutative Noise Condition}}
\author{Claudine von Hallern$^1$\thanks{This project was funded by the Cluster of Excellence "The Future Ocean". 
The "Future Ocean" is funded within the framework of the Excellence Initiative 
by the Deutsche Forschungsgemeinschaft
(DFG) on behalf of the German federal and state governments.
e-mail: vonhallern@math.uni-kiel.de, leonhard@math.uni-kiel.de}
\ \ and Andreas R\"o\ss ler$^2$\thanks{e-mail: roessler@math.uni-luebeck.de}
\bigskip
\\
\small{$^1$Department of Mathematics, Christian-Albrechts-Universit\"at zu Kiel,} \\
\small{Christian-Albrechts-Platz 4, 24118 Kiel, Germany} \\[0.2cm]
\small{$^2$Institute of Mathematics, Universit\"at zu L\"ubeck,} \\
\small{Ratzeburger Allee 160, 23562 L\"ubeck, Germany} 
}
\date{}
\begin{document}

\maketitle

\begin{abstract}
In order to approximate solutions of stochastic partial differential equations (SPDEs) 
that do not possess commutative noise, one has to simulate the involved 
iterated stochastic integrals. Recently, two approximation methods for iterated 
stochastic integrals in infinite dimensions were 
introduced in~\cite{MR3949104}. As a result of this, it is now possible to apply 
the Milstein scheme by Jentzen and R\"ockner~\cite{MR3320928} to equations 
that need not fulfill the commutativity condition. We prove that the order 
of convergence of the 
Milstein scheme can be maintained when combined with one of the two approximation 
methods for iterated stochastic integrals. However, we also have to 
consider the computational cost and the corresponding effective order of 
convergence for a meaningful comparison with other schemes. An analysis 
of the computational cost shows that, in dependence on the equation, a 
combination of the Milstein scheme with both of the two methods may be 
the preferred choice. Further, the Milstein scheme is compared to the exponential 
Euler scheme and we show for different SPDEs depending on the parameters describing, 
e.g., the regularity of the equation, which one of the schemes achieves the 
highest effective order of convergence.
\end{abstract}
%

\section{Introduction}\label{HR:Sec:Intro}
It is well known that for a commutative stochastic differential equation 
the Milstein scheme can be easily implemented as no iterated stochastic 
integrals have to be simulated. However, if we deal with an SPDE which 
does not fulfill the commutativity condition, it is, in general, not 
possible to rewrite the expression in such a way that implementation 
becomes straightforward. 
In the following, we consider SPDEs of type
\begin{equation}\label{HR:SPDE}
  \mathrm{d} X_t = \big( AX_t+F(X_t)\big) \, \mathrm{d}t + B(X_t) \, \mathrm{d}W_t, 
  \quad t\in(0,T], \quad X_0 = \xi.
\end{equation}
In this work, we are  concerned about 
the efficient approximation of the mild solution of
equations \eqref{HR:SPDE} which do not need to have commutative noise
by a higher order scheme, that is, we deal with equations where the 
commutativity condition 
\begin{equation}\label{HR:Comm}
    \big( B'(v) (B(v) u) \big) \tilde{u} =
    \big( B'(v) (B(v) \tilde{u}) \big) u
\end{equation}
for all $v\in H_{\beta}$, $u, \tilde{u} \in U_0$ does \emph{not} have to be fulfilled.
We consider the Milstein scheme for SPDEs 
recently proposed in~\cite{MR3320928} which reads as
$Y_0^{N,K,M} = P_N\xi$ and
\begin{equation}\label{HR:Milstein}
  \begin{split}
  Y_{m+1}^{N,K,M} &= P_N e^{Ah} \bigg(Y_m^{N,K,M} + hF(Y_m^{N,K,M}) 
  + B(Y_m^{N,K,M}) \Delta W^{K,M}_m \\
  &\quad + \int_{t_m}^{t_{m+1}} B'(Y_m^{N,K,M}) \Big(\int_{t_m}^{s}
  B(Y_m^{N,K,M}) \, \mathrm{d}W^K_r\Big)
  \, \mathrm{d}W^K_s \bigg)
  \end{split}
\end{equation}
for some $N,M,K \in\mathbb{N}$, $h=\frac{T}{M}$ and $m\in\{0,\ldots,M-1\}$. 
For details on the notation, we refer to Section~\ref{HR:Sec:Setting} and 
Section~\ref{HR:Sec:Scheme}.
The main difficulty in the approximation of equations with non-commutative 
noise is the simulation of the
iterated stochastic integrals, since it is not possible to rewrite 
integrals such as
\begin{equation*}
  \int_t^{t+h}B'(X_t)\Big(\int_t^sB(X_t)\,\mathrm{d}W_r^K\Big)\,\mathrm{d}W_s^K
\end{equation*}
for $h>0$, $t,t+h\in[0,T]$ and $K\in\mathbb{N}$ 
in terms of increments of the approximation $(W_t^K)_{t\in[0,T]}$ 
of the $Q$-Wiener process $(W_t)_{t\in[0,T]}$
like in the commutative case, see~\cite{MR3320928}.
Since the iterated stochastic integrals can, in general,
not be computed explicitly, we need to approximate these terms.
In~\cite{MR3949104}, the authors recently proposed two algorithms
to approximate integrals of type
\begin{equation}\label{HR:IteratedIntSPDE}
 \int_t^{t+h} \Psi\left(\Phi \int_t^s  \mathrm{d}W_r\right) \mathrm{d}W_s
\end{equation}
with $t\geq 0$, $h>0$
for some operators $\Psi\in L(H,L(U,H)_{U_0})$, $\Phi\in L(U,H)_{U_0}$ 
and a $Q$-Wiener process $(W_t)_{t\in[0,T]}$. 
Applying these algorithms, it is possible to implement the Milstein scheme stated in
\eqref{HR:Milstein} if we choose $\Psi = B'(Y_t)$ and $\Phi = B(Y_t)$ for some 
$B : H_{\beta} \rightarrow L(U, H)_{U_0}$
and an approximation $Y_t \in H_{\beta}$ with $t \geq 0$ and 
$\beta\in [0, 1)$. For more details on the operators, we refer 
to~\cite{MR3949104} and Section~\ref{HR:Sec:Setting}.  
In this work, we combine the Milstein scheme with the approximation of 
the iterated stochastic integrals. 
\\ \\
For finite dimensional stochastic differential equations, the issue of 
how to simulate iterated stochastic integrals was answered, e.g., by 
\cite{MR1178485} and \cite{MR1843055}. In this setting, the Milstein 
scheme combined with the approximation as specified by \cite{MR1843055} 
outperforms the method that was introduced in \cite{MR1178485}
in terms of the computational cost when the temporal order of convergence 
of the Milstein scheme is to be preserved. The results in \cite{MR3949104} 
suggest that in the infinite dimensional setting of SPDEs, it is not obvious 
which of the two methods requires less computational effort. Therefore, in 
this work, we analyze the cost involved in the simulation for each of the 
two methods in detail and also compare the Milstein scheme combined 
with each method to the exponential Euler scheme.
\section{Analysis of the Numerical Scheme}
We present two versions of the Milstein scheme for non-commutative SPDEs 
in this section. To be precise, we analyze two schemes which differ by the 
method that is used to approximate the iterated stochastic integrals that 
are involved. We prove in Section~\ref{HR:Sec:Scheme} that the order of 
convergence that the Milstein scheme obtains for commutative equations, 
see \cite{MR3320928}, can be maintained if the iterated integrals are 
approximated by the methods introduced in \cite{MR3949104}. In 
Section~\ref{HR:Sec:Cost}, these two versions of the Milstein scheme are 
compared to each other and to the exponential Euler scheme when the 
computational cost is also taken into account.
\subsection{Setting and Assumptions}\label{HR:Sec:Setting}
The setting that we work in is similar to the one considered for the 
Milstein scheme in \cite{MR3320928} except that the commutativity 
condition (24) in their paper (see also equation~\eqref{HR:Comm}) does 
not have to be fulfilled, that we replace the space $L_{\text{HS}}(U_0,H)$ 
by $L(U,H)_{U_0}\subset L_{\text{HS}}(U_0,H)$ in assumption (A3) and that 
we introduce a projection operator in (A3).
\\ \\
Let $T\in(0,\infty)$ be fixed, let $(H,\langle \cdot,\cdot \rangle_H)$
and $(U,\langle \cdot,\cdot\rangle_U)$ denote some separable real-valued
Hilbert spaces.
We fix some probability space $(\Omega,\mathcal{F},P)$ and denote by 
$(W_t)_{t\in[0,T]}$ a $U$-valued 
$Q$-Wiener process with respect to the
filtration $(\mathcal{F}_t)_{t\in [0,T]}$ 
which fulfills the usual conditions. The operator 
$Q \in L(U)$ is assumed to be nonnegative, symmetric and to
have finite trace. We denote its eigenvalues 
by $\eta_j$ with corresponding eigenvectors $\tilde{e}_j$ for $j\in\mathcal{J}$
with some countable index set $\mathcal{J}$ forming an orthonormal basis
of $U$~\cite{MR2329435}. 
We employ the following series representation
of the $Q$-Wiener process, see~\cite{MR2329435},
\begin{equation*}
   W_t = \sum_{\substack{j\in\mathcal{J} \\ \eta_j \neq 0}} 
   \sqrt{\eta_j} \, \tilde{e}_j \, \beta_t^j, \quad t\in[0,T].
\end{equation*}
Here, $(\beta_t^j)_{t\in[0,T]}$ denote independent 
real-valued Brownian motions for all $j\in\mathcal{J}$
with $\eta_j\neq 0$.
By means of the operator $Q$, we define the subspace
$U_0 \subset U$ as $U_0 =  Q^{\frac{1}{2}}U$.
The set 
of Hilbert-Schmidt operators mapping from $U$ to $H$ is denoted by $L_{\text{HS}}(U,H)$
and the space 
of linear bounded operators on $U$ restricted to $U_0$ by
$(L(U,H)_{U_0},\|\cdot\|_{L(U,H)})$  with
$L(U,H)_{U_0} := \{T \colon U_0 \to H \, | \, T\in L(U,H) \}$.
Moreover, we designate $L^{(2)}(U,H) = L(U,L(U,H))$ 
and $L_{\text{HS}}^{(2)}(U,H)= L_{\text{HS}}(U,L_{\text{HS}}(U,H))$.\\ \\
Our aim is to approximate the mild solution of SPDE~\eqref{HR:SPDE} 
and, therefore, we impose the following assumptions.
\begin{description}
  \item[(A1)]  The linear operator 
  $A \colon \mathcal{D}(A)\subset H \to H$ generates an analytic semigroup
  $S(t) = e^{At}$ for all $t\geq 0$. 
  Let $\lambda_i \in (0,\infty)$ denote the eigenvalues of $-A$ with 
  eigenvectors $e_i$ for $i \in \mathcal{I}$ and some countable index set $\mathcal{I}$,
  i.e., it holds $-Ae_i =\lambda_i e_i$ for all $i\in\mathcal{I}$. Moreover, assume that
  $\inf_{i\in\mathcal{I}}\lambda_i >0$ and that
  the eigenfunctions $\{ e_i : i\in\mathcal{I}\}$ of $-A$
  form an orthonormal basis of $H$, see~\cite{MR1873467}.
  Furthermore, 
  \begin{equation*}
    Av = \sum_{i\in\mathcal{I}} -\lambda_i\langle v,e_i\rangle_H e_i
  \end{equation*}
  for all $v\in \mathcal{D}(A)$. By means of $A$, we define the real Hilbert
  spaces
  $H_r := \mathcal{D}((-A)^r)$ for $r\in[0,\infty)$
  with norm $\|x\|_{H_r} =\|(-A)^rx\|_H$ for
  $x\in H_r$.
  \item[(A2)] For some $\beta\in[0,1)$, assume that 
  $F \colon H_{\beta} \to H$ is twice continuously Fr\'{e}chet differentiable
  with $\sup_{v\in H_{\beta}} \|F'(v)\|_{L(H)} < \infty$ and 
  $\sup_{v\in H_{\beta}} \|F''(v)\|_{L^{2}(H_{\beta},H)}<\infty$.
  \item[(A3)]
  The operator $B \colon H_{\beta} \to L(U,H)_{U_0}$ 
  is twice continuously Fr\'{e}chet 
  differentiable with 
  $\sup_{v\in H_{\beta}} \|B'(v)\|_{L(H,L(U,H))} < \infty$,
  $\sup_{v\in H_{\beta}} \|B''(v)\|_{L^{(2)}(H_{\beta},L_{\text{HS}}(U_0,H)))}<\infty$. 
  Assume that $B(H_{\delta}) \subset L_{\text{HS}}(U_0,H_{\delta})$ for some $\delta
  \in (0,\tfrac{1}{2})$ and that 
  \begin{align*}
      \| B(u) \|_{L_{\text{HS}}(U_0,H_{\delta})} &\leq C ( 1 +
      \| u \|_{H_{\delta}} ) , \\
      \| B'(v) PB(v) - B'(w) PB(w) \|_{L_{\text{HS}}^{(2)}(U_0,H)} &\leq C
      \| v - w \|_{H} , \\
      \| (-A)^{-\vartheta} B(v) Q^{-\alpha} \|_{L_{\text{HS}}(U_0,H)} &\leq C
      (1 + \| v \|_{H_{\gamma}})
  \end{align*}
  for some constant $C>0$, all $u \in H_{\delta}$, $v, w \in H_{\gamma}$, where
  $\gamma \in \left[ \max( \beta, \delta),
  \delta + \frac{1}{2} \right)$, $\alpha \in (0,\infty)$, $\vartheta \in \left( 0,
  \frac{1}{2} \right)$, 
  any projection operator $P \colon H \to \text{span}\{e_i: i\in\tilde{\mathcal{I}}\}\subset H$
  with finite index set $\tilde{\mathcal{I}}\subset \mathcal{I}$ and the 
  case that $P$ is the identity.
  \item[(A4)] Assume that the initial value $\xi \colon \Omega \to H_{\gamma}$
  fulfills $\mathrm{E}\big[\|\xi\|^4_{H_{\gamma}}\big] <\infty$ and that it is
  $\mathcal{F}_0$-$\mathcal{B}(H_{\gamma})$-measurable.
\end{description}
  
In the following, we do not distinguish between the operator $B$ and its extension 
$\tilde{B} \colon H \to L(U,H)_{U_0}$ which is globally Lipschitz continuous; this holds as 
$H_{\beta}\subset H$ is dense. With $F$, we proceed analogously. 
Conditions (A1)--(A4) imply the existence of a unique mild solution 
$X \colon [0,T] \times \Omega \to H_{\gamma}$ for SPDE
\eqref{HR:SPDE}, see~\cite{MR2852200,MR3320928}.
\subsection{The Milstein Scheme for Non-commutative SPDEs}
\label{HR:Sec:Scheme}
We define the numerical scheme under consideration
and introduce the corresponding discretizations of the infinite dimensional spaces. 
To be precise, we need to discretize the time 
interval $[0,T]$, project the Hilbert space $H$ 
to some finite dimensional subspace and we need an
approximation of the infinite dimensional stochastic process 
$(W_t)_{t\in[0,T]}$. 
For the discretization of the solution
space $H$, we define a projection operator
$P_N \colon H \to H_N$ that maps $H$ to the finite dimensional subspace 
$H_N := \text{span}\{e_i : i\in\mathcal{I}_N\} \subset H$
for some fixed $N \in \mathbb{N}$. This projection is expressed by the index
set $\mathcal{I}_N \subset \mathcal{I}$ 
with $|\mathcal{I}_N|=N$ that picks $N$
basis functions. We specify this operator as
\begin{equation*}
  P_N x = \sum_{i\in\mathcal{I}_N} \langle x,e_i\rangle_H e_i, \quad x\in H.
\end{equation*}
Similarly, we approximate the $Q$-Wiener process. 
For $K\in\mathbb{N}$, we define the projected $Q$-Wiener process 
$(W^K_t)_{t\in[0,T]}$ taking values in
$U_K := \text{span}\{\tilde{e}_j : j\in\mathcal{J}_K\} \subset U$
by
\begin{equation*}
  W_t^K := 
  \sum_{j\in\mathcal{J}_K} \sqrt{\eta_j} \tilde{e}_j \beta_t^j,
  \quad t\in[0,T],
\end{equation*}
for some index set $\mathcal{J}_K \subset \mathcal{J}$ with $|\mathcal{J}_K| = K$ and 
$\eta_j\neq 0$ for $j\in\mathcal{J}_K$.
For the temporal discretization, we choose 
an equidistant time step for legibility of the representation.
Let $h = \frac{T}{M}$ for some $M\in\mathbb{N}$ and denote $t_m = m\cdot h$ 
for $m\in\{0,\ldots,M\}$.
On this grid, we define the increments of the projected $Q$-Wiener process
\begin{equation*}
    \Delta W_m^{K,M} := W_{t_{m+1}}^K - W_{t_m}^K
    = \sum_{j \in \mathcal{J}_K}
    \sqrt{\eta_j} \, \Delta \beta_m^j \, \tilde{e}_j 
\end{equation*}
where the increments of the real-valued Brownian
motions are given by  $\Delta \beta_m^j = 
\beta_{t_{m+1}}^j-\beta_{t_m}^j$
for $m \in \{0,\ldots, M-1\}$, $j\in\mathcal{J}_K$. 
We apply these discretizations
to the setting described above. Then, the Milstein 
scheme yields a discrete-time stochastic process which we denote by 
$(\bar{Y}_m^{N,K,M})_{m \in \{0,\ldots, M\}}$ such that $\bar{Y}_m^{N,K,M}$ 
is $\mathcal{F}_{t_m}$-$\mathcal{B}(H)$-measurable
for all $m \in \{0,\ldots,M\}$, $M\in\mathbb{N}$. We define the 
Milstein scheme (MIL) for non-commutative SPDEs
based on~\cite{MR3320928} as
$\bar{Y}_{0}^{N,K,M} = P_N \xi$ and
\begin{equation} \label{HR:MIL}
  \begin{split}
  \bar{Y}^{N,K,M}_{m+1} &= P_Ne^{Ah}\bigg(\bar{Y}^{N,K,M}_m + hF(\bar{Y}^{N,K,M}_m)
  + B(\bar{Y}^{N,K,M}_m)\Delta W_{m}^{K,M} \\
  &\quad + \sum_{i,j\in\mathcal{J}_K} B'(\bar{Y}^{N,K,M}_m)
  \big(P_N B(\bar{Y}^{N,K,M}_m)\tilde{e}_i,\tilde{e}_j\big)  \bar{I}^Q_{(i,j),m}\bigg)
  \end{split}
\end{equation}
for all $m\in\{0,\ldots,M-1\}$.
Compared to the Milstein scheme \eqref{HR:Milstein} proposed in \cite{MR3320928}, we added
an additional projector and replaced the iterated stochastic integrals
\begin{equation*}
 I^Q_{(i,j),m} := 
 \int_{t_m}^{t_{m+1}} \int_{t_m}^s \langle \mathrm{d} W_r, \tilde{e}_i \rangle_U 
 \langle \mathrm{d}W_s, \tilde{e}_j \rangle_U 
\end{equation*}
by an approximation $\bar{I}^Q_{(i,j),m}$
for all $i,j\in\mathcal{J}_K$ and $m\in\{0,\ldots,M-1\}$.
We can show that the error estimate for the Milstein approximation
that is obtained in 
the commutative case remains valid for the scheme MIL in
\eqref{HR:MIL} if $\bar{I}^Q_{(i,j),m}$ 
represents an approximation obtained by one of the methods 
introduced in \cite{MR3949104} provided the accuracy 
for these approximations is chosen appropriately. 
If Algorithm~1 in \cite{MR3949104} is employed to 
approximate the iterated integrals, we denote the numerical 
scheme~\eqref{HR:MIL} by 
MIL1 and the approximation $\bar{I}^{Q}_{(i,j),m}$ of 
$I^Q_{(i,j),m}$ is denoted by 
$\bar{I}^{Q,(D),(1)}_{(i,j),m}$.
This algorithm is based on a series representation of the 
iterated stochastic integral which is truncated after $D$ summands for some 
$D\in\mathbb{N}$, see~\cite{MR1178485,MR3949104}. 
If we employ Algorithm~2 instead, the scheme~\eqref{HR:MIL} is called MIL2 and 
we denote the approximation $\bar{I}^{Q}_{(i,j),m}$ of $I^Q_{(i,j),m}$ by
$\bar{I}^{Q,(D),(2)}_{(i,j),m}$. The main difference compared 
to Algorithm~1 is that the 
series is not only truncated but the remainder is
approximated by a multivariate normally distributed random
vector additionally, see~\cite{MR3949104,MR1843055} for details.
Let
\begin{align*}
  \mathcal{E}(M,K) &= \bigg( \mathrm{E}\bigg[\Big\|\int_{t_l}^{t_{l+1}}B'(\bar{Y}_l)
  \Big(\int_{t_l}^s P_N B(\bar{Y}_l) \, \mathrm{d}W_r^K\Big) \, \mathrm{d}W^K_s \\
  &\quad  - \sum_{i,j\in\mathcal{J}_K} \bar{I}_{(i,j),l}^Q B'(\bar{Y}_l)
  (P_N B(\bar{Y}_l)\tilde{e}_i, \tilde{e}_j)\Big\|_H^2\bigg] \bigg)^{\frac{1}{2}}
\end{align*}
for all $l\in\{0,\ldots,m-1\}$, $m\in\{1,\ldots,M\}$
and $M,K\in\mathbb{N}$ denote the approximation error of the 
iterated integral term. 
Then, we obtain the following error estimate.
\begin{thm}[Convergence of Milstein scheme] \label{HR:Thm:Milstein}
   Let assumptions (A1)--(A4) hold. Then, there exists a
    constant $C_{Q,T} \in (0,\infty)$, independent of $N$, $K$ and $M$, such
    that for $(\bar{Y}_m^{N,K,M})_{0 \leq m \leq M}$, defined by the Milstein scheme in
    \eqref{HR:MIL}, it holds
    \begin{align*}
     & \Big( \mathrm{E}\Big[ \big\| X_{t_m} - \bar{Y}_m^{N,K,M} \big\|_H^2
      \Big] \Big)^{\frac{1}{2}} \\
      & \quad\leq C_{Q,T} \Big( \Big( \inf_{i \in \mathcal{I} \setminus
      \mathcal{I}_N} \lambda_i \Big)^{-\gamma}
      + \Big( \sup_{j \in \mathcal{J} \setminus \mathcal{J}_K}
      \eta_j \Big)^{\alpha}
     + M^{-q_{\text{MIL}}}
      +\mathcal{E}(M,K)M^{\frac{1}{2}}\Big)
    \end{align*}
    with $q_{\text{MIL}} = \min(2(\gamma-\beta),\gamma)$ and 
    for all $m \in \{0,\ldots, M\}$ and all $N,K,M \in
    \mathbb{N}$. The parameters are determined by assumptions (A1)--(A4). 
\end{thm}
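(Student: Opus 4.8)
The plan is to compare the fully discrete scheme $\bar Y^{N,K,M}$ with an auxiliary scheme $Y^{N,K,M}$ that coincides with \eqref{HR:MIL} except that the approximated double integrals $\bar I^Q_{(i,j),m}$ are replaced by the exact ones $I^Q_{(i,j),m}$; equivalently, $Y^{N,K,M}$ carries the exact iterated-integral term $\int_{t_m}^{t_{m+1}}B'(Y_m^{N,K,M})\big(\int_{t_m}^{s}P_N B(Y_m^{N,K,M})\,\mathrm{d}W_r^K\big)\,\mathrm{d}W_s^K$. By the triangle inequality in $L^2(\Omega;H)$,
\begin{equation*}
\big(\mathrm{E}\big[\|X_{t_m}-\bar Y_m^{N,K,M}\|_H^2\big]\big)^{1/2}
\leq \big(\mathrm{E}\big[\|X_{t_m}-Y_m^{N,K,M}\|_H^2\big]\big)^{1/2}
+\big(\mathrm{E}\big[\|Y_m^{N,K,M}-\bar Y_m^{N,K,M}\|_H^2\big]\big)^{1/2},
\end{equation*}
so it suffices to bound the first summand by $\big(\inf_{i\in\mathcal{I}\setminus\mathcal{I}_N}\lambda_i\big)^{-\gamma}+\big(\sup_{j\in\mathcal{J}\setminus\mathcal{J}_K}\eta_j\big)^{\alpha}+M^{-q_{\text{MIL}}}$ and the second summand by $\mathcal{E}(M,K)M^{1/2}$.

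For the first summand I would revisit the convergence analysis of the Milstein scheme in \cite{MR3320928}. The crucial observation is that the commutativity condition \eqref{HR:Comm} is used there only to rewrite the double integral for implementation, \emph{not} in the error analysis itself; since $Y^{N,K,M}$ retains the exact iterated-integral term, the mild-It\^o expansion and the associated estimates carry over and yield the temporal rate $q_{\text{MIL}}=\min(2(\gamma-\beta),\gamma)$ together with the spatial and noise contributions. Concretely, one represents both $X_{t_m}$ and $Y_m^{N,K,M}$ by the (discrete) variation-of-constants formula, expands $B$ to first order so that the Milstein term captures the leading part of the stochastic increment, and controls the remainders using the smoothing property $\|(-A)^{r}e^{At}\|_{L(H)}\lesssim t^{-r}$, the moment bounds on $X$ and $Y^{N,K,M}$ guaranteed by (A1)--(A4), and the Lipschitz and growth conditions in (A2)--(A3). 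The only genuinely new bookkeeping is the extra inner projector $P_N$: the difference between $\int_{t_m}^{s}P_N B(\cdot)\,\mathrm{d}W_r^K$ and $\int_{t_m}^{s}B(\cdot)\,\mathrm{d}W_r^K$ involves $(\Id-P_N)B(\cdot)$, whose contribution is of the order of the spatial error $\big(\inf_{i\in\mathcal{I}\setminus\mathcal{I}_N}\lambda_i\big)^{-\gamma}$ and is therefore absorbed without deteriorating the rate.

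For the second summand I would set $e_m:=Y_m^{N,K,M}-\bar Y_m^{N,K,M}$ and, suppressing the superscripts on $Y$, derive the recursion
\begin{equation*}
e_{m+1}=P_Ne^{Ah}\Big(e_m+h\big(F(Y_m)-F(\bar Y_m)\big)+\big(B(Y_m)-B(\bar Y_m)\big)\Delta W_m^{K,M}+R_m\Big),
\end{equation*}
where $R_m$ is the difference of the two iterated-integral terms. I would split $R_m=R_m^{(1)}+R_m^{(2)}$ with
\begin{equation*}
R_m^{(1)}=\sum_{i,j\in\mathcal{J}_K}\big(I^Q_{(i,j),m}-\bar I^Q_{(i,j),m}\big)\,B'(\bar Y_m)\big(P_N B(\bar Y_m)\tilde e_i,\tilde e_j\big),
\end{equation*}
and $R_m^{(2)}$ collecting the terms in which the coefficient $v\mapsto B'(v)(P_N B(v)\cdot,\cdot)$ is evaluated at $Y_m$ versus $\bar Y_m$. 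By definition $\big(\mathrm{E}\big[\|R_m^{(1)}\|_H^2\big]\big)^{1/2}=\mathcal{E}(M,K)$, and since the approximations $\bar I^Q_{(i,j),m}$ produced by the algorithms of \cite{MR3949104} are conditionally unbiased given $\mathcal{F}_{t_m}$ and use mutually independent randomness across the time steps, the terms $R_l^{(1)}$ are pairwise $L^2$-orthogonal (and orthogonal, after conditioning on $\mathcal{F}_{t_m}$, to the lower-order stochastic integrals appearing in the remaining summands). The term $R_m^{(2)}$ is Lipschitz in $e_m$ by the estimate $\|B'(v)PB(v)-B'(w)PB(w)\|_{L_{\text{HS}}^{(2)}(U_0,H)}\leq C\|v-w\|_H$ of (A3) (applied with $P=P_N$), and picks up an extra factor of order $h$ from $\sum_{i,j}I^Q_{(i,j),m}$; similarly $F$ and $B$ contribute feedback Lipschitz in $e_m$. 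Hence, using $\|P_Ne^{Ah}\|_{L(H)}\leq 1$, the It\^o isometry for the $B(\cdot)\Delta W$ term and the orthogonality of the $R_l^{(1)}$,
\begin{equation*}
\mathrm{E}\big[\|e_m\|_H^2\big]\lesssim \sum_{l=0}^{m-1}\mathrm{E}\big[\|R_l^{(1)}\|_H^2\big]+(\text{feedback terms})\lesssim M\,\mathcal{E}(M,K)^2 ,
\end{equation*}
where the feedback of $R^{(2)}$, $F$ and $B$ is absorbed through a discrete Gronwall argument into the constant $C_{Q,T}$. Taking square roots yields the claimed bound $\mathcal{E}(M,K)M^{1/2}$.

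The main obstacle is the second summand, and specifically obtaining the factor $M^{1/2}$ rather than $M$: this rests on isolating the genuinely stochastic, conditionally mean-zero part $R^{(1)}$ of the per-step integral error from the Lipschitz feedback $R^{(2)}$ and exploiting the resulting martingale orthogonality \emph{before} invoking Gronwall's inequality. A secondary technical point is verifying that the extra inner projector $P_N$ in \eqref{HR:MIL} does not degrade the temporal rate established in \cite{MR3320928}.
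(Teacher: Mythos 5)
Your proposal is correct and follows essentially the same route as the paper: the paper likewise introduces the auxiliary scheme with exact iterated integrals (noting that the proof in \cite{MR3320928} never uses commutativity and that only the extra projector $P_N$ needs care), and then splits $Y_m-\bar Y_m$ via an intermediate process $Y_{m,\bar Y}$ into a Lipschitz-feedback part handled by Gronwall and the accumulated integral-approximation errors, whose independence (and mean-zero property) across time steps yields $\mathrm{E}\big[\|Y_m-\bar Y_m\|_H^2\big]\leq C M\,\mathcal{E}(M,K)^2$ and hence the factor $M^{1/2}\mathcal{E}(M,K)$. Your one-step recursion with the split $R_m=R_m^{(1)}+R_m^{(2)}$ is just an unrolled version of the paper's decomposition, with identical substance.
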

The proof of this statement is given at the end of this section. 
\\ \\
Depending on the choice of the algorithm, we get a different error 
bound for $\mathcal{E}(M,K)$. We set $\Psi = B'(\bar{Y}_l)$ 
and $\Phi = P_NB(\bar{Y}_l)$ in~\eqref{HR:IteratedIntSPDE}. Then, we 
can transfer the error estimates given in~\cite[Corollary~1, 
Corollary~2, Theorem~4]{MR3949104} to our setting. Thus,
for Algorithm~1 there exists some constant $C_{Q,T}>0$ such that 
\begin{align} \label{HR:Err-Alg1}
  \mathcal{E}(M,K) = \mathcal{E}^{(D),(1)}(M,K) \leq C_{Q,T} \frac{1}{M \, \sqrt{D}}
\end{align}
for all $D,K,M\in\mathbb{N}$.
In contrast, for Algorithm~2, we get an estimate 
that converges in $D$ with order~$1$. It is,
however, also dependent on the number $K$ which
controls the approximation of the $Q$-Wiener process
as well as on the eigenvalues 
$\eta_j$, $j \in \mathcal{J}_K$,
of the operator $Q$.
There exists some constant $C_{Q,T}>0$ such that 
\begin{align} \label{HR:Err-Alg2}
  \mathcal{E}(M,K) = \mathcal{E}^{(D),(2)}(M,K) \leq C_{Q,T} 
  \frac{\min \big(  K \sqrt{K-1}, ( \min_{j\in\mathcal{J}_K} \eta_j )^{-1} \big)}{M \, D}
\end{align}
for all 
$D, K, M \in\mathbb{N}$. For example, if we assume 
$\eta_j \asymp j^{-\rho_Q}$,
$\rho_Q > 1$ and all 
$j \in \mathcal{J}=\mathbb{N}$, then 
in the case $\rho_Q<\frac{3}{2}$ it holds $\mathcal{E}^{(D),(2)}(M,K) \leq C_{Q,T} 
(\min_{j\in\mathcal{J}_K} \eta_j)^{-1} \, M^{-1} \, D^{-1}$
and $\mathcal{E}^{(D),(2)}(M,K) \leq C_{Q,T}  \,
K(K-1)^{\frac{1}{2}} \, M^{-1} \, D^{-1}$ for $\rho_Q \geq \frac{3}{2}$.
The proofs of these error estimates can be found in~\cite{MR3949104}.
It is not immediately obvious which of the two algorithms 
is superior, see 
also~\cite{MR3949104} for a discussion of this issue. 
Here, we repeat the considerations in short.
For the two algorithms stated above, we want to select the
integer $D$ such that the order of convergence stated
in Theorem~\ref{HR:Thm:Milstein} is not reduced. 
Therefore, we need to choose $D \geq M^{2\min(2(\gamma-\beta),\gamma)-1}$
for Algorithm~1. 
In contrast, for Algorithm~2, we require  
$D \geq M^{\min(2(\gamma-\beta),\gamma)-\frac{1}{2}} (\min_{j\in\mathcal{J}_K}\eta_j)^{-1}$
or $D \geq M^{\min(2(\gamma-\beta),\gamma)-\frac{1}{2}} K \sqrt{K-1}$. 
Alternatively, one can choose
$D \geq M^{-1} (\sup_{j \in \mathcal{J} \setminus \mathcal{J}_K}\eta_j )^{-2\alpha}$
for Algorithm~1 and 
$\displaystyle D \geq M^{-\frac{1}{2}}(\min_{j\in\mathcal{J}_K}\eta_j)^{-1}
( \sup_{j \in \mathcal{J} \setminus \mathcal{J}_K}\eta_j )^{-\alpha}$
or $\displaystyle D \geq M^{-\frac{1}{2}} K \sqrt{K-1} ( \sup_{j \in \mathcal{J} \setminus \mathcal{J}_K}
\eta_j )^{-\alpha}$ for Algorithm~2.
This shows that the choice of
$D$ depends on $\gamma$, $\beta$,
$K$, $(\eta_j)_{j\in\mathcal{J}_K}$ and on $\alpha$ additionally.
Therefore, the choice of $D$, and with this
the computational effort for the simulation of the iterated stochastic
integrals is dependent on the equation to be solved.
We cannot identify one scheme to be superior in general and refer 
to Section~\ref{HR:Sec:Cost} for details. 
Now, we prove the statement on the convergence of the 
schemes MIL1 and MIL2. 

\begin{proof} [Proof of Theorem~\ref{HR:Thm:Milstein}]
The proof of convergence of the Milstein scheme in \cite{MR3320928} 
does not use the commutativity assumption, therefore, it remains 
valid also in our setting. To ease the notations, we denote by $(Y_m)_{m\in\{0,\ldots,M-1\}}$ 
the Milstein approximation which does not involve an approximation of the 
iterated stochastic integrals
\begin{equation}\label{HR:MILcomm}
  \begin{split}
  Y_{m+1} &= P_N e^{Ah} \bigg(Y_m + h F(Y_m) 
  + B(Y_m) \Delta W^{K,M}_m \\
  &\quad + \int_{t_m}^{t_{m+1}} B'(Y_m) \Big(\int_{t_m}^{s}
  P_N B(Y_m) \, \mathrm{d}W^K_r\Big)
  \, \mathrm{d}W^K_s \bigg).
  \end{split}
\end{equation}
Analogously to the proof for Theorem~1 in~\cite{MR3320928}, we get an estimate for
\eqref{HR:MILcomm} of the form
\begin{align*}
&\Big(\mathrm{E}\big[\|X_{t_m}-Y_m\|_H^2\big]\Big)^{\frac{1}{2}} 
\\ 
& \quad 
\leq
C_{Q,T} \Big( \Big( \inf_{i \in \mathcal{I} \setminus
    \mathcal{I}_N} \lambda_i \Big)^{-\gamma}
    + \Big( \sup_{j \in \mathcal{J} \setminus \mathcal{J}_K}
    \eta_j \Big)^{\alpha} + M^{-\min(2(\gamma-\beta),\gamma)}\Big).
\end{align*}
The proof for the scheme given in~\eqref{HR:MILcomm} can be conducted in 
the same way as for the scheme in~\eqref{HR:Milstein}
except that the projection operator $P_N$ 
in~\eqref{HR:MILcomm} has to be taken into account, see also the 
comments in \cite{MR3842926} and the detailed proof
in~\cite{2015arXiv150908427L}. 
It remains to prove the expression for
the error caused by the approximation 
of the iterated stochastic integrals. We denote $\bar{Y}_m:=\bar{Y}_m^{N,K,M}$ 
for all $m\in\{0,\ldots,M\}$ and compute the following two terms 
\begin{align}\label{HR:Eq:SplitErrorDI}
  \Big( \mathrm{E}\big[\|Y_m-\bar{Y}_m\|_H^2\big]  \Big)^{\frac{1}{2}}
  \leq \Big( \mathrm{E}\big[\|Y_m-Y_{m,\bar{Y}}\|_H^2\big] \Big)^{\frac{1}{2}}
  + \Big( \mathrm{E}\big[\|Y_{m,\bar{Y}}-\bar{Y}_m\|_H^2\big] \Big)^{\frac{1}{2}}
\end{align}
where
\begin{align*}
  Y_{m,\bar{Y}} &=
  P_N\bigg( e^{At_m}X_0+ \sum_{l=0}^{m-1} \int_{t_l}^{t_{l+1}} 
  e^{A(t_m-t_l)}F(\bar{Y}_l)\,\mathrm{d}s
  + \sum_{l=0}^{m-1} \int_{t_l}^{t_{l+1}} e^{A(t_m-t_l)} B(\bar{Y}_l)\,\mathrm{d}W^K_s \\
  &\quad +\sum_{l=0}^{m-1}\int_{t_l}^{t_{l+1}}e^{A(t_m-t_l)}B'(\bar{Y}_l)
  \Big(P_N\int_{t_l}^sB(\bar{Y}_l)\,\mathrm{d}W^K_r\Big)\,\mathrm{d}W_s^K\bigg).
\end{align*}
We insert this expression and obtain
\begin{align*}
  \mathrm{E}\big[\|Y_m-Y_{m,\bar{Y}}\|_H^2\big]  
  &= \mathrm{E}\bigg[\Big\| P_N \Big(
  \sum_{l=0}^{m-1} \int_{t_l}^{t_{l+1}} e^{A(t_m-t_l)}
  \big(F(Y_l)-F(\bar{Y}_l)\big)\,\mathrm{d}s  \\
   &\quad + \sum_{l=0}^{m-1} \int_{t_l}^{t_{l+1}} 
   e^{A(t_m-t_l)}\big( B(Y_l)-B(\bar{Y}_l)\big)\,\mathrm{d}W^K_s  \\
  &\quad
  +\sum_{l=0}^{m-1}
  \Big( \int_{t_l}^{t_{l+1}}e^{A(t_m-t_l)}B'({Y}_l)
  \Big(P_N\int_{t_l}^sB({Y}_l)\,\mathrm{d}W^K_r\Big)\,\mathrm{d}W_s^K  \\
  &\quad
  - \int_{t_l}^{t_{l+1}}e^{A(t_m-t_l)}B'(\bar{Y}_l)
  \Big(P_N\int_{t_l}^sB(\bar{Y}_l)\,\mathrm{d}W^K_r\Big)\,
  \mathrm{d}W_s^K \Big) \Big) \Big\|_H^2 \bigg] \\
  &\leq CMh\sum_{l=0}^{m-1} \int_{t_l}^{t_{l+1}}  \mathrm{E}\bigg[\Big\|
  F(Y_l)-F(\bar{Y}_l)\Big\|_H^2\bigg]\,\mathrm{d}s \\
   &\quad + C\sum_{l=0}^{m-1} \int_{t_l}^{t_{l+1}} 
   \mathrm{E}\bigg[\Big\| B(Y_l)-B(\bar{Y}_l)\Big\|^2_{L_{\text{HS}}(U_0,H)}\bigg]\,\mathrm{d}s  \\
  &\quad
  +C\sum_{l=0}^{m-1}
   \int_{t_l}^{t_{l+1}} \mathrm{E}\bigg[\Big\|B'({Y}_l)
  \Big(P_N\int_{t_l}^sB({Y}_l)\,\mathrm{d}W^K_r\Big) \\
  &\quad
  - B'(\bar{Y}_l)
  \Big(P_N\int_{t_l}^sB(\bar{Y}_l)\,\mathrm{d}W^K_r\Big)\Big\|_{L_{\text{HS}}(U_0,H)}^2 \bigg]\,
  \mathrm{d}s \\
  &\leq
  C_T h \sum_{l=0}^{m-1} \mathrm{E} \Big[\big\|Y_l-\bar{Y}_l\big\|^2_H\Big]
\end{align*}
where the computations are the same as in
\cite[Section 6.3]{MR3320928}.
This estimate mainly employs the Lipschitz
continuity of the involved operators. 
\\ \\
Next, we analyze the second term in~\eqref{HR:Eq:SplitErrorDI}. 
By the stochastic independence of $I^Q_{(i,j),l}$ and $\bar{I}^Q_{(i,j),l}$
from $I^Q_{(i,j),k}$ and $\bar{I}^Q_{(i,j),k}$ for $l \neq k$, we obtain
{\allowdisplaybreaks
\begin{align*}
  \mathrm{E}\big[\|Y_{m,\bar{Y}}-\bar{Y}_m\|_H^2\big] 
  &=
  \mathrm{E}\bigg[\Big\|P_N\Big(\sum_{l=0}^{m-1}
  \sum_{j\in\mathcal{J}_K}e^{A(t_m-t_l)}
  \Big(B'(\bar{Y}_l)\Big(\sum_{i\in\mathcal{J}_K}
  P_N B(\bar{Y}_l)\tilde{e}_iI_{(i,j),l}^Q,\tilde{e}_j \Big)\\
  &\quad
  - B'(\bar{Y}_l)\Big(\sum_{i\in\mathcal{J}_K} P_NB(\bar{Y}_l)
  \tilde{e}_i\bar{I}_{(i,j),l}^Q,\tilde{e}_j\Big) 
 \Big) \Big) \Big\|_H^2 \bigg]\\
 &\leq C \sum_{l=0}^{m-1} \mathrm{E}\bigg[\Big\|
  \int_{t_l}^{t_{l+1}}B'(\bar{Y}_l)
  \Big(\int_{t_l}^s P_NB(\bar{Y}_l)\, 
  \mathrm{d}W_r^K\Big)\, \mathrm{d}W^K_s \nonumber \\
  &\quad
  -\sum_{i,j\in\mathcal{J}_K} \bar{I}_{(i,j),l}^Q B'(\bar{Y}_l) 
  (P_NB(\bar{Y}_l)\tilde{e}_i,\tilde{e}_j)
  \Big\|_H^2\bigg] \nonumber \\
  &= C \sum_{l=0}^{m-1} \mathcal{E}(M,K)^2 \nonumber.
\end{align*}
}%
In total, we get with Gronwall's lemma
\begin{align*}
   \mathrm{E}\big[\|Y_m-\bar{Y}_m\|_H^2\big]
  &\leq C_T h \sum_{l=0}^{m-1} \mathrm{E} \Big[\big\|Y_l-\bar{Y}_l\big\|^2_H\Big]
  + C \sum_{l=0}^{m-1}\mathcal{E}(M,K)^2 \\
  &\leq C M \mathcal{E}(M,K)^2,
\end{align*}
which completes the proof.
\end{proof}
\subsection{Comparison of Computational Cost}
\label{HR:Sec:Cost}
In order to compare the numerical methods introduced in 
this work, we
consider the effective order of convergence based on a 
cost model introduced in~\cite{MR3842926}.
This number combines the theoretical order of convergence, 
as stated for example in Theorem~\ref{HR:Thm:Milstein}, 
with the computational cost involved in the calculation of 
an approximation by a particular scheme. For the computational 
cost model, we assume that each evaluation of a real valued functional 
and each generation of a standard normally distributed random number 
is of some cost $c \geq 1$ whereas each elementary arithmetic 
operation is of unit cost $1$, see~\cite{MR3842926} for 
details. Then, the computational cost for one time step and each 
scheme under consideration can be determined by the corresponding values 
listed in Table~\ref{HR:funcEval-1}.
We compare the two Milstein schemes MIL1 and MIL2 to the 
exponential Euler scheme (EES). 
For the EES, we employ the version 
introduced in~\cite{MR3047942} combined with a 
Galerkin approximation.
The convergence results for the exponential Euler scheme 
in this setting can be obtained similarly as in
the proof of the Milstein scheme in~\cite{MR3320928}, see also 
\cite[Theorem~3.2]{CL16Diss}. We 
state the result without giving a proof.
\begin{prop}[Convergence of EES] \label{HR:Error:EES}
   Assume that (A1)--(A4) hold. Then, there exists a
    constant $C_{T} \in (0,\infty)$, independent of $N$, $K$ and $M$, such
    that for the approximation process $(Y_m^{\text{EES}})_{0 \leq m \leq M}$, 
    defined by the EES, it holds
    \begin{align*}
      & \Big( \mathrm{E}\Big[ \big\| X_{t_m} - Y_m^{\text{EES}} \big\|_H^2
      \Big] \Big)^{\frac{1}{2}} 
      \leq C_{T} \Big( \Big( \inf_{i \in \mathcal{I} \setminus
      \mathcal{I}_N} \lambda_i \Big)^{-\gamma}
      + \Big( \sup_{j \in \mathcal{J} \setminus \mathcal{J}_K}
      \eta_j \Big)^{\alpha} + 
      M^{-q_{\text{EES}}}
      \Big)
    \end{align*}
    with $q_{\text{EES}} = \min(\frac{1}{2},2(\gamma-\beta),\gamma)$
    and for all $m \in \{0,\ldots, M\}$ and all $N,K,M \in
    \mathbb{N}$. 
    The parameters are determined by assumptions (A1)--(A4).
\end{prop}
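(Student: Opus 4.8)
The plan is to adapt the proof of Theorem~\ref{HR:Thm:Milstein} above, exploiting that the Galerkin-based EES is precisely the scheme \eqref{HR:MIL} with the iterated-integral term removed; hence no approximation $\bar{I}^Q_{(i,j),m}$ of the iterated integrals is required and the auxiliary error term $\mathcal{E}(M,K)$ does not appear at all. First I would record the scheme explicitly as $Y_0^{\text{EES}}=P_N\xi$ and
\[
  Y_{m+1}^{\text{EES}} = P_N e^{Ah}\big(Y_m^{\text{EES}} + hF(Y_m^{\text{EES}}) + B(Y_m^{\text{EES}})\,\Delta W_m^{K,M}\big),
\]
and compare it against the mild solution of \eqref{HR:SPDE},
\[
  X_{t_m} = e^{At_m}\xi + \int_0^{t_m} e^{A(t_m-s)}F(X_s)\,\mathrm{d}s + \int_0^{t_m} e^{A(t_m-s)}B(X_s)\,\mathrm{d}W_s,
\]
by writing $Y_m^{\text{EES}}$ in an analogous mild-solution-type representation and subtracting term by term.

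Next I would split the global mean-square error into a spatial-projection contribution, a noise-truncation contribution, and a temporal-discretization contribution. The first two reproduce the terms $(\inf_{i\in\mathcal{I}\setminus\mathcal{I}_N}\lambda_i)^{-\gamma}$ and $(\sup_{j\in\mathcal{J}\setminus\mathcal{J}_K}\eta_j)^\alpha$ verbatim from the Milstein and Jentzen--R\"ockner analyses: the projection error is bounded via the smoothing property of the analytic semigroup together with the $H_\gamma$-regularity of $X$ guaranteed by (A1)--(A4), and the noise-truncation error via the It\^o isometry and the bound $\|(-A)^{-\vartheta}B(v)Q^{-\alpha}\|_{L_{\text{HS}}(U_0,H)}\le C(1+\|v\|_{H_\gamma})$ from (A3). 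Since these estimates do not involve the Milstein correction, they carry over unchanged.

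The decisive point is the temporal contribution. Following the Milstein proof, I would estimate the one-step consistency error and invoke the Lipschitz continuity of $F$ and $B$ to set up a discrete recursion. The crucial difference is that the EES does not reproduce the leading stochastic correction
\[
  \int_{t_l}^{t_{l+1}} e^{A(t_{l+1}-s)}B'(X_{t_l})\Big(\int_{t_l}^s B(X_{t_l})\,\mathrm{d}W_r\Big)\,\mathrm{d}W_s.
\]
Bounding the mean-square norm of this dropped term by the It\^o isometry gives a per-step contribution of order $h^2$; summing the orthogonal increments over the $M=T/h$ steps and taking the square root yields a global contribution of order $M^{-1/2}$. Together with the temporal terms of orders $M^{-2(\gamma-\beta)}$ and $M^{-\gamma}$, which stem from the Taylor expansion of the coefficients evaluated on $H_\beta$ and from the semigroup-increment estimate $\|(e^{A(t_{l+1}-s)}-\mathrm{Id})(-A)^{-\gamma}\|_{L(H)}\le C(t_{l+1}-s)^\gamma$ respectively, this produces the exponent $q_{\text{EES}}=\min(\tfrac12,2(\gamma-\beta),\gamma)$, the only change relative to $q_{\text{MIL}}$ being the new entry $\tfrac12$. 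A discrete Gronwall argument then closes the recursion and delivers the asserted bound.

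I expect the main obstacle to be isolating the clean $M^{-1/2}$ rate for the dropped stochastic term while simultaneously keeping track of the spatial-regularity exponents, that is, verifying that the powers of $(-A)$ and the associated $H_\gamma$-norms can be absorbed into the constant using (A1)--(A4) and that the interplay of semigroup smoothing, temporal H\"older regularity of the mild solution, and Hilbert--Schmidt norms remains consistent throughout. As all of these ingredients already appear in the proof of Theorem~\ref{HR:Thm:Milstein} and only the iterated-integral correction is absent, the argument is in effect a simplification of that proof, requiring no machinery beyond what is used in \cite{MR3320928,CL16Diss}.
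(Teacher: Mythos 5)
Your proposal is correct and coincides with the approach the paper itself points to: Proposition~\ref{HR:Error:EES} is stated without proof, with the remark that the convergence result ``can be obtained similarly as in the proof of the Milstein scheme in~\cite{MR3320928}'' (see also \cite[Theorem~3.2]{CL16Diss}), which is exactly your simplification of the proof of Theorem~\ref{HR:Thm:Milstein} with the iterated-integral correction removed. Your key step --- bounding the dropped term via the It\^o isometry ($h^2$ per step, hence $M^{-1/2}$ globally after summing the orthogonal martingale increments) while the spatial, noise-truncation and remaining temporal terms carry over unchanged --- is the only new ingredient and yields $q_{\text{EES}}=\min(\tfrac{1}{2},2(\gamma-\beta),\gamma)$ as claimed.
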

Note that for the EES we can 
dispense with some of the conditions specified in (A3), e.g., no assumptions
are needed for the second derivative of $B$ and the estimate for
$B'(v) P B(v)-B'(w) P B(w)$ can be suspended.
In the following, let $q$ denote the order of convergence w.r.t.\ 
the step size $h=\frac{T}{M}$. 
Obviously, it holds $q_{\text{MIL}} 
= \min(2(\gamma-\beta),\gamma)\geq\min(\frac{1}{2},2(\gamma-\beta),\gamma)
= q_{\text{EES}}$.
However, we need to take into account the computational cost
in order to determine the scheme that is superior
as we do not need to simulate the iterated integrals 
in the Euler scheme after all. 
Therefore, we derive the effective order of convergence for 
each of the schemes MIL1, MIL2 and EES, see~\cite{MR3842926} for 
details. 
\\ \\
For each approximation $(Y_m)_{m\in\{0,\ldots,M\}}$
under consideration, we minimize the error term 
\begin{equation*}
\sup_{m\in\{0,\ldots,M\}}\big( \mathrm{E}\big[ \| X_{t_m} 
- Y_m \|_H^2  \big] \big)^{\frac{1}{2}} 
\end{equation*}
over all $N,M,K\in\mathbb{N}$ under the constraint  
that the computational cost does not exceed some specified value $\bar{c}>0$.
If we assume that 
$\sup_{j\in\mathcal{J} \setminus \mathcal{J}_K }\eta_j = \mathcal{O}( K^{-\rho_Q})$
and $( \inf_{i\in\mathcal{I}\setminus \mathcal{I}_N}\lambda_i  )^{-1} 
= \mathcal{O}( N^{-\rho_A})$
for some $\rho_A>0$ and $\rho_Q>1$, we obtain the following expression
for all $N,M,K \in\mathbb{N}$ and some $C>0$, see also~\cite{MR3842926},
\begin{equation*}
  \text{err}(\text{SCHEME}) =\sup_{m\in\{0,\ldots,M\}}
  \Big( \mathrm{E}\Big[ \big\| X_{t_m} - Y_m \big\|_H^2  \Big] \Big)^{\frac{1}{2}}
  \leq C \big( N^{-\gamma\rho_A}+K^{-\alpha\rho_Q}+M^{-q} \big).
\end{equation*}
The parameter $q>0$ is determined by the scheme that is considered.
Then, optimization yields the effective
order of convergence, denoted by EOC(SCHEME), which is given as
\begin{equation*}
  \text{err}(\text{SCHEME}) =\mathcal{O}\big(\bar{c}^{\, \, -\text{EOC(SCHEME)}}\big).
\end{equation*}

\begin{table}[b] 
\begin{small}
\begin{center}
  \renewcommand{\arraystretch}{1.4}
  \begin{tabular}{|l|c|c|c|c|}
    \hline
    & \multicolumn{3}{c|}{\# of evaluations of functionals} &
    \\
    Scheme & $\ \ \ P_N F(\cdot)\arrowvert_{H_N} \ \ \ \ $ &
    $\ \ \ P_N B(\cdot)\arrowvert_{U_K} \ \ \ $ &
    $ \ P_N B'(\cdot)\arrowvert_{H_N,U_K} \ $ & \# of $N(0,1)$  r.\ v.\  
    \\
    \hline \hline
    $\operatorname{EES}$ & $N$ & $KN$ & $-$ & $K$ 
    \\ \hline
    $\operatorname{MIL1}$ & $N$ & $KN$ & $KN^2$ & $K(1+2 D)$ 
    \\ \hline
    $\operatorname{MIL2}$ & $N$ & $KN$ & $KN^2$ & $K(1+2 D)+\frac{1}{2} K (K-1)$ 
    \\ \hline
  \end{tabular}
  \renewcommand{\arraystretch}{1.0}
\end{center}
\end{small}
\caption{Computational cost determined by the number of necessary 
evaluations of real-valued functionals and
independent $N(0,1)$-distributed random variables for each 
time step. The choice of $D$ differs for MIL1 and MIL2.}
\label{HR:funcEval-1}
\end{table}
First, we consider Algorithm 1. For the scheme MIL1, the computational 
cost amounts to $\bar{c} = \mathcal{O}(MKN^2)+\mathcal{O}(KM^{2 q_{\text{MIL}}})$, 
see Table~\ref{HR:funcEval-1} and the discussion in the previous section. 
%
We solve the optimization problem and obtain 
\begin{equation} \label{HR:Choice-MNK-MIL1C2}
  \begin{split}
  M &= \mathcal{O}\Big(\bar{c}^{\frac{\gamma\rho_A\alpha\rho_Q}{(2\alpha\rho_Q+\gamma\rho_A)q_{\text{MIL}}+\alpha\rho_Q\gamma\rho_A}}\Big),
  \quad  N = \mathcal{O}\Big(\bar{c}^{\frac{\alpha\rho_Q q_{\text{MIL}}}{(2\alpha\rho_Q+\gamma\rho_A)q_{\text{MIL}}+\alpha\rho_Q\gamma\rho_A}}\Big),\\
   K &= \mathcal{O}\Big(\bar{c}^{\frac{\gamma\rho_A q_{\text{MIL}}}{(2\alpha\rho_Q+\gamma\rho_A)q_{\text{MIL}}+\alpha\rho_Q\gamma\rho_A}}\Big)
   \end{split}
\end{equation}
in the case of 
$\gamma \rho_A (2 q_{\text{MIL}}-1) \leq 2 q_{\text{MIL}}$, denoted as condition M1C2.
These conditions make 
sure that the computational cost is of order $\bar{c}=\mathcal{O}(MKN^2)$. 
Therefore,
we obtain the effective order of convergence from
\begin{equation}\label{HR:StandardEffOrd}
  \text{err}(\operatorname{MIL1}) 
  = \mathcal{O}\Big(\bar{c}^{\, \, -\frac{\gamma\rho_A\alpha\rho_Q q_{\text{MIL}}}{(2\alpha\rho_Q+\gamma\rho_A)q_{\text{MIL}}+\alpha\rho_Q\gamma\rho_A}}\Big),
\end{equation}
which is the same result as for the Milstein scheme
in the case of SPDEs with commutative noise, see the computations in~\cite{MR3842926}.
\\ \\
%
On the other hand, 
in the case of 
$\gamma \rho_A (2 q_{\text{MIL}}-1) \geq 2 q_{\text{MIL}}$, denoted as condition M1C1,
it holds $\bar{c}=\mathcal{O}(KM^{2q_{\text{MIL}}})$ and
optimization yields
\begin{align} \label{HR:Choice-MNK-MIL1C1}
  M = \mathcal{O}\Big(\bar{c}^{\frac{\alpha\rho_Q}
  {(2\alpha\rho_Q+1)q_{\text{MIL}}}}\Big), \quad
  N = \mathcal{O}\Big(\bar{c}^{\frac{\alpha\rho_Q}
  {(2\alpha\rho_Q+1)\gamma\rho_A}}\Big), \quad
  K = \mathcal{O}\Big(\bar{c}^{\frac{1}
  {2\alpha\rho_Q+1}}\Big)
\end{align}
and the effective order of convergence equals
\begin{equation}\label{HR:MIL1EffOrd}
  \text{err}(\operatorname{MIL1}) = \mathcal{O}\Big(\bar{c}^{\, \, -\frac{\alpha\rho_Q}
  {2\alpha\rho_Q+1}}\Big).
\end{equation}

In order to facilitate computation, we 
distinguish the case $(\min_{j\in\mathcal{J}_K} \eta_j)^{-1} = o(K^{\frac{3}{2}})$
which results in $\rho_Q<\frac{3}{2}$ and the case that
$\min_{j\in\mathcal{J}_K} \eta_j = \mathcal{O}(K^{-\frac{3}{2}})$ where 
we choose $\rho_Q \geq \frac{3}{2}$ maximal admissible. In the following, we always 
assume that $\rho_Q$ is chosen maximal such that 
$\sup_{j\in\mathcal{J} \setminus \mathcal{J}_K }\eta_j = \mathcal{O}( K^{-\rho_Q})$
is fulfilled and we refer to 
these two cases by simply writing case $\rho_Q<\frac{3}{2}$ 
and case $\rho_Q \geq \frac{3}{2}$, respectively.
\\ \\
For Algorithm 2, we have to take $\bar{c} = \mathcal{O}(MKN^2)
+ \mathcal{O}(K \min(K^{\frac{3}{2}},K^{\rho_Q})
M^{q_{\text{MIL}}+\frac{1}{2}}) + \mathcal{O}(MK^2)$
into account. 
As above, we need to treat several cases. 
We detail the case $\min(K^{\frac{3}{2}},K^{\rho_Q})=K^{\frac{3}{2}}$, that is, 
$\rho_Q \geq \frac{3}{2}$; the results for $\rho_Q<\frac{3}{2}$ can be obtained 
analogously and are stated in Table~\ref{HR:Tab:Conditions} . 
%
%
The first case corresponds to $\bar{c}=\mathcal{O}(MKN^2)$. 
For $\rho_Q \geq \frac{3}{2}$, $\gamma \rho_A \leq 2 \alpha\rho_Q$ and
$\frac{3}{2} \gamma \rho_A q_{\text{MIL}} + (q_{\text{MIL}}-\frac{1}{2}) \gamma\rho_A\alpha\rho_Q
\leq 2 \alpha\rho_Q q_{\text{MIL}}$, denoted as condition M2C1a, 
we get the same choice for $M$, $N$, $K$ and the same
effective order as for the scheme for SPDEs with 
commutative noise given in~\eqref{HR:Choice-MNK-MIL1C2} and \eqref{HR:StandardEffOrd}.
%
%
In case of $\bar{c}=\mathcal{O}(MK^2)$, that is, 
if $\gamma \rho_A \geq 2 \alpha\rho_Q$ and 
$q_{\text{MIL}} \leq \frac{\alpha \rho_Q}{1+2\alpha\rho_Q }$, denoted as condition M2C2a,
we obtain
\begin{equation} \label{HR:Choice-MNK-MIL2C2a}
  \begin{split}
  M = \mathcal{O}\Big(\bar{c}^{\frac{\alpha\rho_Q}{\alpha \rho_Q+2q_{\text{MIL}}}}\Big),
  \quad  N = \mathcal{O}\Big(\bar{c}^{\frac{\alpha \rho_Q q_{\text{MIL}}}
  {\gamma\rho_A\alpha \rho_Q+2\gamma\rho_A q_{\text{MIL}}}}\Big), \quad
  K = \mathcal{O}\Big(\bar{c}^{\frac{q_{\text{MIL}}}{\alpha \rho_Q+2q_{\text{MIL}}}}\Big)
  \end{split}
\end{equation}
with effective order of convergence given by
\begin{equation} \label{HR:DFM2EffOrd}
  \text{err}(\operatorname{MIL2}) = \mathcal{O}\Big(
  \bar{c}^{\, \, -\frac{\alpha\rho_Q q_{\text{MIL}}}
  {\alpha \rho_Q+2q_{\text{MIL}}}}\Big).
\end{equation}
Note that in this case, it follows $q_{\text{MIL}}<\frac{1}{2}$.
%
Next, we consider the case of 
$2 \alpha \rho_Q q_{\text{MIL}} \leq \frac{3}{2} \gamma \rho_A q_{\text{MIL}}
+ (q_{\text{MIL}}-\frac{1}{2}) \gamma\rho_A \alpha\rho_Q$ and 
$q_{\text{MIL}} \geq \frac{\alpha \rho_Q}{1+2 \alpha \rho_Q}$, 
denoted as condition M2C3a,
i.e., where $\bar{c}=\mathcal{O}(M^{q_{\text{MIL}}+\frac{1}{2}}K^{\frac{5}{2}})$. 
Then, we get
\begin{equation} \label{HR:Choice-MNK-MIL2C3a}
\begin{split}
  M &= \mathcal{O}\Big(\bar{c}^{\frac{\alpha\rho_Q}{\alpha \rho_Q(q_{\text{MIL}}+\frac{1}{2})+\frac{5}{2}q_{\text{MIL}}}}\Big),
  \quad \quad  N = \mathcal{O}\Big(\bar{c}^{\frac{\alpha \rho_Q q_{\text{MIL}}}
  {\gamma\rho_A(\alpha \rho_Q(q_{\text{MIL}}+\frac{1}{2})+\frac{5}{2}q_{\text{MIL}})}}\Big), \\ 
  K &= \mathcal{O}\Big(\bar{c}^{\frac{q_{\text{MIL}}}{\alpha \rho_Q(q_{\text{MIL}}+\frac{1}{2})+\frac{5}{2}q_{\text{MIL}}}}\Big)
\end{split}
\end{equation}
with 
\begin{equation} \label{HR:DFM2EffOrd2}
  \text{err}(\operatorname{MIL2}) = \mathcal{O}\Big(\bar{c}^{\, \, -\frac{\alpha\rho_Q q_{\text{MIL}}}
  {\alpha \rho_Q(q_{\text{MIL}}+\frac{1}{2})+\frac{5}{2}q_{\text{MIL}}}}\Big).
\end{equation}
%
Finally, we want to mention one case for $\rho_Q<\frac{3}{2}$ explicitly where 
we assume $2\alpha q_{\text{MIL}} \leq \gamma \rho_A q_{\text{MIL}} + (q_{\text{MIL}}-\frac{1}{2}) \alpha \gamma \rho_A$
and $q_{\text{MIL}} + \frac{1}{2} \alpha \rho_Q \leq (1+\alpha) \rho_Q q_{\text{MIL}}$, which are the 
conditions denoted as M2C3b. 
In this case, it 
holds that $\bar{c} = \mathcal{O}(M^{q_{\text{MIL}}+\frac{1}{2}} K^{\rho_Q+1})$ which is the only 
case where the dominating term for $\bar{c}$ depends on $\rho_Q$ explicitly. Here we get
\begin{equation} \label{HR:Choice-MNK-MIL2C3b}
  \begin{split}
  M &= \mathcal{O}\Big(\bar{c}^{\frac{\alpha\rho_Q}{\alpha \rho_Q(q_{\text{MIL}}+\frac{1}{2})+q_{\text{MIL}}(\rho_Q+1)}}\Big),
  \quad \quad  N = \mathcal{O}\Big(\bar{c}^{\frac{\alpha \rho_Q q_{\text{MIL}}}
  {\gamma\rho_A(\alpha \rho_Q(q_{\text{MIL}}+\frac{1}{2})+q_{\text{MIL}}(\rho_Q+1))}}\Big), \\ 
  K &= \mathcal{O}\Big(\bar{c}^{\frac{q_{\text{MIL}}}{\alpha \rho_Q(q_{\text{MIL}}+\frac{1}{2})+q_{\text{MIL}}(\rho_Q+1)}}\Big)
  \end{split}
\end{equation}
with 
\begin{equation}\label{HR:DFM2EffOrd3b}
  \text{err}(\operatorname{MIL2}) = \mathcal{O}\Big(\bar{c}^{\, \, -\frac{\alpha\rho_Q q_{\text{MIL}}}
  {\alpha \rho_Q(q_{\text{MIL}}+\frac{1}{2})+q_{\text{MIL}}(\rho_Q+1)}}\Big).
\end{equation}
\begin{table}[tbp]
 \begin{center}
{\small
\renewcommand{\arraystretch}{1.5}
\begin{tabular}{|c|c|c|c|}\hline
  Abbr.\ & Condition & $\bar{c}$ & EOC \\ 
\hline \hline
  M1C1           & $\gamma\rho_A (2q-1) \geq 2q$ & $\mathcal{O}(M^{2q} K)$ & \eqref{HR:MIL1EffOrd} \\ 
\hline
  M1C2           & $\gamma\rho_A (2q -1) \leq 2 q$ & $\mathcal{O}(M K N^2)$ & \eqref{HR:StandardEffOrd} \\ 
\hline
\hline
  M2C1a          & $
  \rho_Q \geq \frac{3}{2} \, \wedge \, \gamma\rho_A \leq 2 \alpha \rho_Q \, \wedge \, 
		\frac{3}{2} \gamma\rho_A q + (q-\frac{1}{2}) \alpha \rho_Q \gamma \rho_A \leq 2 \alpha \rho_Q q 
		$
  & $\mathcal{O}(M K N^2)$ & \eqref{HR:StandardEffOrd} \\ 
\hline
  M2C1b          & $ 
  \rho_Q < \frac{3}{2} \ \wedge \ \gamma\rho_A \leq 2 \alpha\rho_Q \ \wedge \ \gamma \rho_A q + (q-\frac{1}{2}) \alpha \gamma \rho_A \leq 2 \alpha q 
  $
  & $\mathcal{O}(M K N^2)$ & \eqref{HR:StandardEffOrd} \\
\hline
  M2C2a          & $
  \rho_Q \geq \frac{3}{2} \ \wedge \ 2\alpha\rho_Q \leq \gamma\rho_A \ \wedge \ q \leq \frac{\alpha\rho_Q}{2 \alpha\rho_Q +1} 
  $
  & $\mathcal{O}(M K^2)$ & \eqref{HR:DFM2EffOrd} \\ 
\hline
  M2C2b          & $ 
  \rho_Q < \frac{3}{2} \ \wedge \ 2\alpha\rho_Q \leq \gamma\rho_A \ \wedge \ q < \frac{\alpha \rho_Q}{2 \alpha \rho_Q + 2(\rho_Q-1)} 
  $
  & $\mathcal{O}(M K^2)$ & \eqref{HR:DFM2EffOrd} \\ 
\hline
  M2C3a          & $
  \rho_Q \geq \frac{3}{2} \, \wedge \, 2\alpha\rho_Q q \leq \frac{3}{2} \gamma\rho_A q + (q-\frac{1}{2}) \alpha \rho_Q \gamma \rho_A 
  \, \wedge \, q \geq \frac{\alpha \rho_Q}{2 \alpha\rho_Q +1} 
  $ 
  & $\mathcal{O}(M^{q+\frac{1}{2}} K^{\frac{5}{2}})$ & \eqref{HR:DFM2EffOrd2} \\ 
\hline
  M2C3b          & $
  \rho_Q < \frac{3}{2} \ \wedge \ 2\alpha q \leq \gamma \rho_A q + (q-\frac{1}{2}) \alpha \gamma \rho_A 
		\, \wedge \,
		q \geq \frac{\alpha \rho_Q}{2 \alpha \rho_Q + 2(\rho_Q-1)} 
		$ 
  & $\mathcal{O}(M^{q+\frac{1}{2}} K^{\rho_Q+1})$ & \eqref{HR:DFM2EffOrd3b} \\
\hline
\end{tabular}
}
\caption{Conditions M1C1 and M1C2 are the ones that have to be considered for MIL1, 
 whereas the remaining conditions belong to MIL2.
 Under each given condition, the corresponding scheme MIL1 or MIL2 possesses computational cost $\bar{c}$, respectively. 
 Note that $\rho_Q>1$ and we denote $q=q_{\text{MIL}}$.
}
 \label{HR:Tab:Conditions}
 \end{center}
\end{table}
All possible cases M1C1 and M1C2 for MIL1 as well as M2C1a, M2C1b, M2C2a, M2C2b,
M2C3a and M2C3b for MIL2 together with their effective 
orders of convergence are summarized in Table~\ref{HR:Tab:Conditions}. Further, 
the optimal choice for $M$, $N$ and $K$ for the cases not detailed 
is given by the case with the same effective order of convergence listed above.
\\ \\
In order to determine the scheme with the highest effective order of
convergence, we compare the schemes MIL1 and MIL2 to each other 
and to the exponential Euler scheme.
For the EES, the optimal choice for $M$, $N$ and $K$ is given by
\begin{equation} \label{HR:Choice-MNK-EES}
  \begin{split}
  M &= \mathcal{O}\Big(\bar{c}^{\frac{\gamma\rho_A\alpha\rho_Q}{(\alpha\rho_Q+\gamma\rho_A)q_{\text{EES}}+\alpha\rho_Q\gamma\rho_A}}\Big),
  \quad  N = \mathcal{O}\Big(\bar{c}^{\frac{\alpha\rho_Q q_{\text{EES}}}{(\alpha\rho_Q+\gamma\rho_A)q_{\text{EES}}+\alpha\rho_Q\gamma\rho_A}}\Big),\\
   K &= \mathcal{O}\Big(\bar{c}^{\frac{\gamma\rho_A q_{\text{EES}}}{(\alpha\rho_Q+\gamma\rho_A)q_{\text{EES}}+\alpha\rho_Q\gamma\rho_A}}\Big)
   \end{split}
\end{equation}
with the effective order of convergence
\begin{equation}\label{HR:EESEffOrd}
  \text{err}(\operatorname{EES}) = \mathcal{O}\Big(
  \bar{c}^{\, \, -\frac{q_{\text{EES}}\gamma\rho_A\alpha\rho_Q}{(\alpha\rho_Q+\gamma\rho_A)q_{\text{EES}}
  +\gamma\rho_A\alpha\rho_Q}}\Big)
\end{equation}
where $q_{\text{EES}} = \min(\frac{1}{2},2(\gamma-\beta),\gamma)$, see~\cite{MR3842926}.
\\ \\
Obviously, our main interest is in parameter 
constellations such that $q_{\text{MIL}} > q_{\text{EES}}$ which implies 
that $q_{\text{EES}} = \frac{1}{2}$. In case of $q_{\text{MIL}} = q_{\text{EES}} \leq \frac{1}{2}$
the EES is always the optimal choice compared to MIL1 and MIL2.
Therefore, we assume $q_{\text{MIL}} > q_{\text{EES}}=\frac{1}{2}$ in the following. 
Then, by comparing the different effective orders of convergence 
across parameter sets, one can show that except for one case
the Milstein scheme always has a higher effective order of convergence
than the exponential Euler scheme.
We refer to Table~\ref{HR:Tab:CompareEffOrder} for an overview; this 
shows that for larger $q_{\text{MIL}}$ the Milstein scheme is 
favoured over the exponential Euler scheme. 
Here, we only elaborate one case. Assume that
the parameters take values such that either the scheme 
MIL1 or the scheme MIL2 obtains the same effective order of convergence as the 
scheme for SPDEs with commutative noise~\eqref{HR:StandardEffOrd}. Note that 
\eqref{HR:StandardEffOrd} is the highest effective order that can be attained by 
MIL1 and MIL2 for $q_{\text{MIL}}>\frac{1}{2}$ anyway.
We compare the effective order~\eqref{HR:StandardEffOrd} 
with that of the exponential Euler scheme in \eqref{HR:EESEffOrd}
\begin{equation*}
  \frac{q_{\text{MIL}} \gamma \rho_A \alpha \rho_Q}{(2\alpha\rho_Q+\gamma\rho_A) q_{\text{MIL}}
  + \gamma \rho_A \alpha\rho_Q} \substack{<\\>}
  \frac{q_{\text{EES}}\gamma\rho_A\alpha\rho_Q}{(\alpha\rho_Q+\gamma\rho_A)q_{\text{EES}}
  +\gamma\rho_A\alpha\rho_Q}.
\end{equation*}
This can be rewritten such that we obtain
\begin{equation*}
 q_{\text{MIL}} (\gamma\rho_A-q_{\text{EES}}) \substack{<\\ >}q_{\text{EES}}\gamma\rho_A.
\end{equation*}
For $q_{\text{MIL}}>q_{\text{EES}} = \frac{1}{2}$, this results in
\begin{equation*}
 \gamma \rho_A \substack{< \\ >} \frac{q_{\text{MIL}}}{2q_{\text{MIL}}-1}.
\end{equation*}
The condition $\gamma \rho_A > \frac{q_{\text{MIL}}}{2q_{\text{MIL}}-1}$ is required for a higher 
effective order of the Milstein scheme whereas 
$\gamma \rho_A \leq \frac{q_{\text{MIL}}}{2q_{\text{MIL}}-1}$ results in a higher 
order for the exponential Euler scheme.
Clearly, either condition M1C1 or condition M1C2 has to be fulfilled and
in case of M1C1 the effective order of convergence for MIL1 in \eqref{HR:MIL1EffOrd} 
is greater than that in \eqref{HR:EESEffOrd} for the EES scheme if $q_{\text{MIL}} > q_{\text{EES}}$.
Thus, in the case that M1C1 is fulfilled it only remains to check whether MIL2 
attains an even higher effective order of convergence than \eqref{HR:MIL1EffOrd}. 
These calculations can be conducted in a similar way as above.
\\ \\
\begin{table}[tbp]
\begin{center}
{\small
\renewcommand{\arraystretch}{1.5}
 \begin{tabular}{|c|c|c|c|}\hline 
  Conditions & Optimal scheme & Optimal $M$, $N$, $K$ & \ EOC \ \\ 
  \hline\hline
  M1C1 $\wedge$ M2C1a & MIL2 & \eqref{HR:Choice-MNK-MIL1C2} & \eqref{HR:StandardEffOrd} \\ 
  \hline 
  M1C1 $\wedge$ M2C1b & MIL2 & \eqref{HR:Choice-MNK-MIL1C2} & \eqref{HR:StandardEffOrd} \\ 
  \hline 
  M1C1 $\wedge$ M2C3a $\wedge$ $(2\alpha \rho_Q-3) q < \alpha \rho_Q$ & MIL1 & \eqref{HR:Choice-MNK-MIL1C1} & \eqref{HR:MIL1EffOrd} \\ 
  \hline 
  M1C1 $\wedge$ M2C3a $\wedge$ $(2\alpha \rho_Q-3) q \geq \alpha \rho_Q$ & MIL2 & \eqref{HR:Choice-MNK-MIL2C3a} & \eqref{HR:DFM2EffOrd2} \\ 
  \hline 
  M1C1 $\wedge$ M2C3b $\wedge$ $\alpha (2q-1) < 2q$ & MIL1 & \eqref{HR:Choice-MNK-MIL1C1} & \eqref{HR:MIL1EffOrd} \\
  \hline 
  M1C1 $\wedge$ M2C3b $\wedge$ $\alpha (2q-1) \geq 2q$ & MIL2 & \eqref{HR:Choice-MNK-MIL2C3b} & \eqref{HR:DFM2EffOrd3b} \\
  \hline 
  M1C2 $\wedge$ $\gamma \rho_A (2q-1) \leq q$ &  EES & \eqref{HR:Choice-MNK-EES} & \eqref{HR:EESEffOrd} \\
  \hline 
  M1C2 $\wedge$ M2C1a $\wedge$ $\gamma \rho_A (2q-1) > q$ & MIL1=MIL2 & \eqref{HR:Choice-MNK-MIL1C2} & \eqref{HR:StandardEffOrd} \\
  \hline 
  M1C2 $\wedge$ M2C1b $\wedge$ $\gamma \rho_A (2q-1) > q$ & MIL1=MIL2 & \eqref{HR:Choice-MNK-MIL1C2} & \eqref{HR:StandardEffOrd} \\
  \hline 
  M1C2 $\wedge$ M2C3a $\wedge$ $\gamma \rho_A (2q-1) > q$ & MIL1 & \eqref{HR:Choice-MNK-MIL1C2} & \eqref{HR:StandardEffOrd} \\
  \hline 
  M1C2 $\wedge$ M2C3b $\wedge$ $\gamma \rho_A (2q-1) > q$ & MIL1 & \eqref{HR:Choice-MNK-MIL1C2} & \eqref{HR:StandardEffOrd} \\
  \hline 
 \end{tabular}
}
\caption{For a given parameter set, the conditions in this table
have to be checked in order to determine the optimal scheme 
among the schemes EES, MIL1 and MIL2 for the case of $q=q_{\text{MIL}} > q_{\text{EES}} = \frac{1}{2}$. 
In case of $q_{\text{MIL}} =q_{\text{EES}} \leq \frac{1}{2}$, the exponential Euler scheme is always the optimal choice.
}
\label{HR:Tab:CompareEffOrder}
\end{center}
\end{table}%
Based on the effective order of convergence, it is not
possible to identify one scheme 
that dominates
the others across all parameter constellations. 
The results of a comparison are summarized in 
Table~\ref{HR:Tab:CompareEffOrder}; this overview
clearly illustrates the
dependence on the parameters $q_{\text{MIL}}$, $\alpha$, $\gamma$, $\rho_A$ and $\rho_Q$.
For completeness, we want to note that parts of (A3) do not have to be 
fulfilled for the exponential Euler scheme. 
Therefore, there exist equations where this scheme might indeed
be beneficial for parameter sets other than 
the combinations stated in Table~\ref{HR:Tab:CompareEffOrder}.
The effective order for the Milstein scheme indicates that, compared to the Euler schemes,
the increase in the computational cost that results from the approximation
of the iterated stochastic integrals is, in most cases,
significantly compensated by the higher theoretical order of convergence $q_{\text{MIL}}$ w.r.t.\ 
the time steps that the Milstein scheme attains.
\subsection{Example}
Finally, we illustrate the theoretical 
results on the effective order of convergence and the consequences 
for the choice of a particular scheme, 
summarized in Table~\ref{HR:Tab:CompareEffOrder}, with 
an example.
\\ \\
Throughout this section, we fix the following setting.
Let $H=U=L^2((0,1),\mathbb{R})$, set $T=1$, $\beta=0$
and $\mathcal{I}=\mathcal{J} =\mathbb{N}$.
We choose $A$ to be the Laplacian with Dirichlet 
boundary conditions; to be precise, $A=\frac{1}{100} \Delta$.
Thus, it holds for the eigenvalues $\lambda_i = \frac{\pi^2i^2}{100}$,
for the eigenvectors $e_i= \sqrt{2} \sin(i\pi x)$ for 
$i\in\mathbb{N}$, $x\in(0,1)$
and on the boundary, we have
$X_t(0) = X_t(1) = 0$ for all $t\in(0,T]$.
The operator $Q$ is defined by $\eta_j = j^{-3}$ 
and $\tilde{e}_j = \sqrt{2} \sin(j\pi x)$ for $j\in\mathbb{N}$, $x\in(0,1)$. 
As a result of this, it holds $\rho_A=2$ and $\rho_Q=3$.
Moreover,
we choose $F(y) = 1-y$, $y\in H$ and $\xi(x)= X_0(x) =0$ 
for all $x\in(0,1)$. The operator 
$B$ is defined in the following.
It fits into
the general setting introduced for
the numerical analysis in~\cite[Sec.~5.3]{MR3842926},
which we repeat here in short only. 
Let some functionals $\mu_{ij}:H_{\beta}\rightarrow \mathbb{R}$, 
$\phi_{ij}^k:H_{\beta}\rightarrow \mathbb{R}$ 
be given for $i,k\in \mathcal{I}$, $j\in\mathcal{J}$
such that $\phi_{ij}^k$ is the Fr\'{e}chet derivative of $\mu_{ij}$
in direction $e_k$. Then, we define
\begin{align*}
  B(y)u =\sum_{i\in\mathcal{I}}\sum_{j\in\mathcal{J}} 
  \mu_{ij}(y)\langle u,\tilde{e}_j\rangle_U e_i
\end{align*}
and it holds that
\begin{align*}
  \big( B'(y)(B(y)v) \big)u  =\sum_{i,k\in\mathcal{I}}\sum_{j,r\in\mathcal{J}} 
  \phi_{ij}^k(y)\mu_{kr}(y)\langle v,\tilde{e}_r\rangle_U \langle u,\tilde{e}_j\rangle_U e_i
\end{align*}
for $y\in H_{\beta}$ and $u,v\in U_0$. 
For details, we refer to~\cite[Sec.~5.3]{MR3842926}.
\\ \\
Here, we choose $\mu_{ij}(y) = \frac{\langle y,e_j\rangle_H}{i^4+j^4}$ 
for all $i\in\mathcal{I}$,
$j\in\mathcal{J}$ and $y\in H$.
With this choice, we get
$\phi_{ij}^k(y) = \left\{\begin{array}{rr} 0, & k \neq j \\ \frac{1}{i^4+j^4},
& k = j \end{array}\right.$ for all $i,k \in\mathcal{I}$, $j\in \mathcal{J}$, $y\in H$.
We show that assumptions (A1)--(A4) are fulfilled in this setting.
For conditions (A1), (A2) and 
(A4) this is obvious. It remains to examine (A3). 
We use the expressions that have been computed
in~\cite[Sec.~5.3]{MR3842926}, that is,
\begin{equation*}
  \|B(y)\|_{L(U,H_{\delta})}\leq  \sum_{i\in\mathcal{I}} \sum_{j\in\mathcal{J}} \lambda_i^{\delta}
  \vert\mu_{ij}(y) \vert \leq \sum_{i\in\mathcal{I}} \sum_{j\in\mathcal{J}}
  \frac{1}{j^{2-2\delta}}\frac{\pi^{2\delta}}{100^{\delta}}\frac{1}{i^{2-2\delta}}  \|y\|_{H_{\delta}}
\end{equation*}
for all $y\in H_{\delta}$.
Thus,  we get $\|B(y)\|_{L(U,H_{\delta})}\leq C(1+\|y\|_{H_{\delta}})$ 
for all $y\in H_{\delta}$
if $\delta <\frac{1}{2}$, where we select the maximal value for $\delta$.
Moreover, we check
\begin{align*}
  \|(-A)^{-\vartheta}B(z)Q^{-\alpha}\|_{L_{\text{HS}}(U_0,H)} &= \Big(\sum_{j\in\mathcal{J}}
  \eta_j^{1-2\alpha} \sum_{i\in\mathcal{I}} \lambda_i^{-2\vartheta} \mu_{ij}^2(z)\Big)^{\frac{1}{2}}\\
  &\leq C\Big(\sum_{j\in\mathcal{J}} \frac{1}{j^{3(1-2\alpha)+8+4\gamma}} 
  \sum_{i\in\mathcal{I}}
  \frac{1}{i^{4\vartheta}} \|z\|_{H_{\gamma}}^2\Big)^{\frac{1}{2}}
\end{align*}
for all $z\in H_{\gamma}$. This shows that 
$\|(-A)^{-\vartheta}B(z)Q^{-\alpha}\|_{L_{\text{HS}}(U_0,H)}
\leq C(1+\|z\|_{H_{\gamma}})$ is fulfilled 
for all $z\in H_{\gamma}$ if $\alpha <\frac{7}{3}$.
The remaining conditions in (A3) hold as well.
These are not stated here as they do not restrict the parameters. 
Finally, we show that the commutativity condition~\eqref{HR:Comm}, 
expressed in the notation presented above,
is actually not fulfilled.
On the one hand, we get
\begin{align*}
  \sum_{k\in\mathcal{I}} \phi_{im}^k(y)\mu_{kn}(y)=  \frac{1}{i^4+m^4}
  \frac{\langle y,e_n\rangle_H}{m^4+n^4}
\end{align*}
but
\begin{align*}
  \sum_{k\in\mathcal{I}} \phi_{in}^k(y)\mu_{km}(y) =  \frac{1}{i^4+n^4}
  \frac{\langle y,e_m\rangle_H}{n^4+m^4}
\end{align*}
holds for $y\in H$ and $i\in\mathcal{I}$, $n,m\in\mathcal{J}$. Obviously, 
these two terms are not equal for all $n,m\in\mathcal{J}$.
\\ \\
From the parameter values stated above, we compute
$\gamma \in [\frac{1}{2},1)$. With this information, we can identify the scheme that is superior 
according to Table~\ref{HR:Tab:CompareEffOrder}. Let $\varepsilon\in(0,\frac{1}{2})$
be arbitrarily small and choose
$q_{\text{MIL}}=\gamma=1-\varepsilon>q_{\text{EES}}$ and $\alpha =\frac{7}{3}-\varepsilon$. First, we check 
condition M1C2, see Table~\ref{HR:Tab:Conditions}, which holds as
\begin{equation*}
 \gamma\rho_A (2q_{\text{MIL}}-1) \leq 2q_{\text{MIL}}
  \quad 
  \Leftrightarrow
  \quad
  2(1-\varepsilon)(1-2\varepsilon) \leq 2(1-\varepsilon) .
\end{equation*}
Moreover, condition M2C1a in Table~\ref{HR:Tab:Conditions} is fulfilled as well because 
it holds $\rho_Q=3$, $\gamma \rho_A 
= 2(1-\varepsilon) \leq 6(\frac{7}{3}-\varepsilon) =  2 \alpha \rho_Q$
and it is easy to check that
\begin{align*}
  \frac{3}{2} \gamma \rho_A q_{\text{MIL}} + \Big(q_{\text{MIL}}-\frac{1}{2} \Big) \gamma \rho_A \alpha \rho_Q &\leq 2 \alpha \rho_Q q_{\text{MIL}} \\
  \Leftrightarrow \quad 
  3 (1-\varepsilon) + 6 \Big(\frac{1}{2}-\varepsilon \Big) \Big(\frac{7}{3}-\varepsilon \Big) &\leq 6 \Big(\frac{7}{3}-\varepsilon \Big)
\end{align*}
is fulfilled due to $\gamma=q_{\text{MIL}}$, which proves condition M2C1a.
From Table~\ref{HR:Tab:CompareEffOrder}, we expect that both schemes MIL1 and MIL2 
obtain the same effective order of convergence \eqref{HR:StandardEffOrd} which exceeds the order of the 
exponential Euler scheme in this case.
For some fixed $N\in\mathbb{N}$, we compute the relation of $N,M,K$ from 
\eqref{HR:Choice-MNK-MIL1C2}. This yields $M = N^2$ and 
$K= \lceil N^{\frac{2}{7}} \rceil$ for the Milstein schemes.
Moreover, we calculate the effective order of convergence as
$\text{error}(\operatorname{MIL1}) = \text{error}(\operatorname{MIL2})= \mathcal{O}(\bar{c}^{-\frac{7}{15}
+\varepsilon})$
for some arbitrarily small $\varepsilon >0$.
For the EES, on the other hand, we obtain $M=N^4$, $K=\lceil N^{\frac{2}{7}} \rceil$ and 
$\text{error}(\operatorname{EES}) = \mathcal{O}(\bar{c}^{-\frac{14}{37}+\varepsilon})$.
\begin{figure}[H]
\begin{center}
 \includegraphics[scale=0.6]{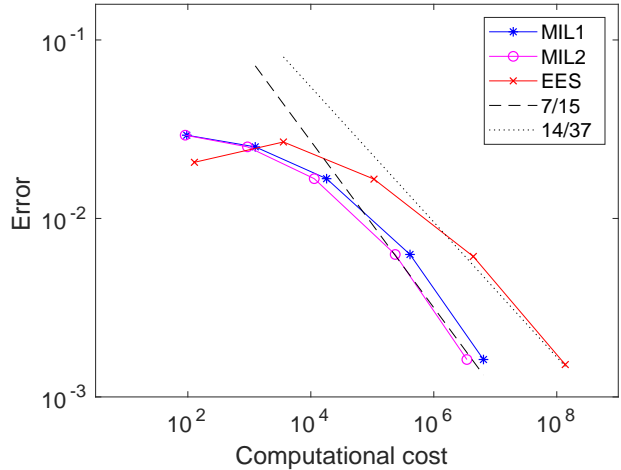}
\caption[Error against computational cost]{Error against computational cost computed from 200 
paths for $N \in\{ 2, 4, 8, 16, 32\}$ in log-log scale.}
\label{HR:Plot:NC1}
\end{center}
\end{figure}%
\begin{table}[t]
\begin{center}
\renewcommand{\arraystretch}{1.2}
\begin{tabular}{|p{0.65cm}|p{0.65cm}|p{0.65cm}||p{1.4cm}|p{1.4cm}|p{1.4cm}||p{1.4cm}|p{1.4cm}|p{1.4cm}|}\hline
\multicolumn{3}{|c||}{} & \multicolumn{3}{c||}{MIL1}      &    \multicolumn{3}{c|}{MIL2}       \\ \hline
 $N$   & $M$       &   $K$                & $\bar{c}$                        &Error                  & Std                   &  $\bar{c}$                          & Error                  & Std     \\ \hline
 2   & 4       & $\lceil 2^{\frac{2}{7}} \rceil$   & 64             		& $2.9\cdot 10^{-2}$	&  $7.2\cdot 10^{-3}$   & 71			       &$2.9\cdot 10^{-2}$	&  $7.2\cdot 10^{-3}$   \\ \hline
 4   & $2^4$   & $\lceil 2^{\frac{4}{7}} \rceil$   &1024			& $2.5\cdot 10^{-2}$ 	&  $5.4\cdot 10^{-4}$	& 758		               &$2.5\cdot 10^{-2}$ 	&  $5.4\cdot 10^{-4}$  \\ \hline		
 8   & $2^6$   & $\lceil 2^{\frac{6}{7}} \rceil$     &16384			& $1.7\cdot 10^{-2}$	&  $1.1\cdot 10^{-4}$   & 9897 		       &$1.7\cdot 10^{-2}$	&  $1.1\cdot 10^{-4}$   \\ \hline
 16  & $2^8$   & $\lceil 2^{\frac{8}{7}} \rceil$   &393216			& $6.3\cdot 10^{-3}$ 	&  $2.8\cdot 10^{-5}$ 	& 220196	               &$6.3\cdot 10^{-3}$ 	&  $2.8\cdot 10^{-5}$   \\ \hline
 32  & $2^{10}$   & $\lceil 2^{\frac{10}{7}} \rceil$   &6291456			& $1.6\cdot 10^{-3}$ 	&  $2.6\cdot 10^{-5}$ 	& 3325212	               &$1.6\cdot 10^{-3}$ 	&  $2.6\cdot 10^{-5}$   \\ \hline
\end{tabular}
\quad \\[0.2cm]
\renewcommand{\arraystretch}{1.2}
\begin{tabular}{|p{0.65cm}|p{0.65cm}|p{0.65cm}||p{1.8cm}|p{1.4cm}|p{1.4cm}|}\hline
\multicolumn{3}{|c||}{}  & \multicolumn{3}{|c|}{Exponential Euler}   \\ \hline
$N$    &$M$       &   $K$         & $\bar{c}$                   & Error                 & Std   \\ \hline
2    &$2^4$   &$\lceil 2^{\frac{2}{7}} \rceil$  & 64		&$2.1\cdot 10^{-2}$ 	&  $6.0\cdot 10^{-3}$  \\ \hline
4    &$2^8$   &$\lceil 2^{\frac{4}{7}} \rceil$   & 2048		&$2.7\cdot 10^{-2}$ 	   &  $7.2\cdot 10^{-4}$  \\ \hline		
8    &$2^{12}$&$\lceil 2^{\frac{6}{7}} \rceil$   & 65536	&$1.7\cdot 10^{-2}$ 	   &  $2.1\cdot 10^{-4}$  \\ \hline
16   &$2^{16}$&$\lceil 2^{\frac{8}{7}} \rceil$   & 3145728	&$6.1\cdot 10^{-3}$        &  $4.4\cdot 10^{-5}$  \\ \hline
32   &$2^{20}$&$\lceil 2^{\frac{10}{7}} \rceil$   & 100663296	&$1.5\cdot 10^{-3}$        &  $6.6\cdot 10^{-6}$  \\ \hline
\end{tabular}
 \end{center}
\caption[Error and standard deviation]{Error and standard deviation obtained from 200 paths. 
The computational cost $\bar{c}$ is computed as
$\bar{c}(\operatorname{MIL1}) = MKN^2+KM^{2q_{\text{MIL}}}+M(K+N+KN)$, 
$\bar{c}(\operatorname{MIL2}) = MKN^2+M^{q_{\text{MIL}}+\frac{1}{2}}K^{\frac{5}{2}}+MK^2+M(K+N+KN)$
and $\bar{c}(\operatorname{EES}) =  MKN+MN+MK$.}
\label{HR:Tab:NC1}
\end{table}%

In the numerical analysis, we simulate 200 paths with the schemes MIL1, MIL2 
and EES.
The results are compared to a substitute for the exact solution --
an approximation computed with the linear implicit Euler scheme \cite{MR1825100}
with $N = 2^5$, $K= \lceil 2^{\frac{10}{7}} \rceil$ and $M = 2^{16}$.
Our findings are summarized in Table~\ref{HR:Tab:NC1} and Figure~\ref{HR:Plot:NC1}.
In Figure~\ref{HR:Plot:NC1}, we plot the errors versus the computational cost based on
the cost model that is used for the analysis. Here, one observes that 
the Milstein schemes obtain a higher effective order of convergence than the Euler scheme.
Moreover, Table~\ref{HR:Tab:NC1} illustrates the difference in the computational costs of these schemes.
The Euler scheme involves costs which are significantly higher. A comparison of MIL1 and MIL2 shows 
for this example that the Milstein scheme in combination with Algorithm~2 involves a lower computational 
cost than the Milstein scheme combined with Algorithm~1.
%
%

%
\bibliographystyle{spmpsci}
%

%
%
%
\end{document}